\newcommand{\cB}{\mathcal{B}}
\newcommand{\alephes}{{\aleph_0}}
\newcommand{\op}{\operatorname}
\newcommand{\cf}{\op{cf}}
\newcommand{\sm}{\setminus}
\long\def\forget#1\forgotten{}
\newcommand{\Union}{\bigcup}
\newcommand{\sbst}{\subseteq}
\newcommand{\N}{\mathbb{N}}
\newcommand{\be}{\begin{enumerate}}
\newcommand{\ee}{\end{enumerate}}
\newcommand{\bi}{\begin{itemize}}
\newcommand{\ei}{\end{itemize}}
\renewcommand{\i}{\item}
\newtheorem{thm}{Theorem}
\newtheorem{prop}[thm]{Proposition}
\newtheorem{prob}[thm]{Problem}
\newtheorem{lem}[thm]{Lemma}
\theoremstyle{definition}
\newtheorem{defn}[thm]{Definition}
\theoremstyle{remark}
\title{On a problem of Juh\'asz and van Mill}
\author{Saharon Shelah}
\address{
Institute of Mathematics, Hebrew University of Jerusalem, Givat
Ram, 91904 Jerusalem, Israel, and Mathematics Department, Rutgers
University, New Brunswick, NJ 08903, U.S.A. }
\email{shelah@math.huji.ac.il}
\author{Boaz Tsaban}
\address{Department of Mathematics, Bar-Ilan University,
Ramat-Gan 52900, Israel;
and
Department of Mathematics,
Weizmann Institute of Science, Rehovot 76100, Israel.}
\email{tsaban@math.biu.ac.il}
\urladdr{http://www.cs.biu.ac.il/\~{}tsaban}
\begin{document}
\begin{abstract}
A $30$ years old and still open problem of Juh\'asz and van Mill
asks whether there exists a cardinal $\kappa$ such that
every regular dense in itself countably compact space
has a dense in itself subset of cardinality at most $\kappa$.
We give a negative answer for the analogous question
where \emph{regular} is weakened to \emph{Hausdorff},
and \emph{coutnably compact} is strengthened to
\emph{sequentially compact}.
\end{abstract}

\maketitle

\section{Introduction}

Every compact Hausdorff dense in itself topological space contains
a countable subset which is dense in itself \cite{JvM}.
A space is \emph{countably compact} if each countably infinite subset
of that space has a cluster point.
Tka\v{c}enko \cite{Tka} asked whether every regular
countably compact dense in itself topological space
contains a countable subset which is dense in itself. This was
answered in the negative by Juh\'asz and van Mill \cite{JvM}, who
concluded their paper with the following.

\begin{prob}[{Juh\'asz and van Mill \cite{JvM}}]\label{jvm}
Does there exist a cardinal $\kappa$ such that every
regular  dense in itself countably compact topological space
has a dense in itself subset of cardinality at most $\kappa$?
\end{prob}

Despite several efforts made in the past, the problem is still open.
We will prove the following. A topological space is \emph{scattered}
if each of its nonempty subsets has an isolated point.

\begin{thm}\label{main}
Assume that $\lambda^\alephes=\lambda$.
There exists a Hausdorff space $S$ such that:
\be
\i $|S|=\lambda$;
\i $S$ is dense in itself;
\i Each subspace of $S$ of cardinality less than the cofinality of $\lambda$ is scattered; and
\i $S$ is sequentially compact.
\ee
\end{thm}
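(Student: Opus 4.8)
The plan is to construct $S$ explicitly, together with a rank function $\rho\colon S\to\cf(\lambda)$, so that all four clauses can be read off from the interplay between $\rho$ and a topology in which a point is a limit only of sets whose ranks are cofinal in $\cf(\lambda)$. First note that $\lambda^{\alephes}=\lambda$ forces $\cf(\lambda)>\alephes$ (otherwise K\"onig's theorem would give $\lambda^{\alephes}>\lambda$), so $\cf(\lambda)$ is a regular uncountable cardinal; this is what makes clause~(3) meaningful and will be used repeatedly. I would take the underlying set to consist of $\lambda$ many nodes, stratified by $\rho$ into levels $X_\beta=\{x\in S:\rho(x)<\beta\}$ for $\beta<\cf(\lambda)$, with $S=\Union_{\beta<\cf(\lambda)}X_\beta$. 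The size clause~(1) is arranged directly, and here the hypothesis enters: the topology must supply, for each of the at most $\lambda^{\alephes}=\lambda$ many $\w$-sequences that can arise, a limit point of higher rank, and $\lambda^{\alephes}=\lambda$ guarantees there is room for all of them among $\lambda$ points. The Hausdorff property (separating two nodes by the first place at which they diverge) and clause~(2) are then designed in: I would define the neighbourhood filter of each $x$ so that every basic neighbourhood already contains points of rank above every prescribed $\beta<\cf(\lambda)$, whence $x$ is a limit point of $S$ and $S$ has no isolated points.

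For sequential compactness (clause~4) I would argue as follows. Given a sequence $\langle s_n:n<\w\rangle$ in $S$, the countable set of ranks $\{\rho(s_n):n<\w\}$ is bounded below $\cf(\lambda)$ by regularity, so the sequence lives inside a single level $X_\beta$. The construction is set up precisely so that each sequence appearing in a level is assigned, at a later stage, a limit point $p$ of rank at least $\beta$ catching a subsequence $s_{n_k}\to p$; a pigeonhole/diagonal refinement (using that each coordinate ranges over an ordinal, itself sequentially compact because $\cf(\lambda)>\alephes$) lets me first thin the sequence to a monotone, hence convergent, one, and then read off the limit point provided by the construction. I expect this step to be routine once the bookkeeping over the $\lambda$ many sequences is in place.

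The heart of the proof, and the step I expect to be the real obstacle, is clause~(3): every $A\sbst S$ with $|A|<\cf(\lambda)$ is scattered. Again by regularity of $\cf(\lambda)$ such an $A$ has bounded rank, so $A\sbst X_\beta$ for some $\beta<\cf(\lambda)$, and it suffices to prove that each level $X_\beta$ is scattered. This is exactly where naive constructions fail: if one turns points into limits by attaching ordinary convergent sequences, then iterating the attachment $\w$ times below a single point produces a countable crowded subspace (a copy of the rationals $\mathbb{Q}$), violating~(3). The design must therefore forbid any bounded-rank set from being internally crowded while still forcing every point of $S$ to be a limit point. The mechanism I would use is that a point is a limit point only of sets of cofinal rank: a genuine convergent sequence $s_n\to p$ is scattered because its terms, having bounded rank, are isolated from one another, whereas the crowdedness of $S$ is witnessed only by nets whose ranks are cofinal in $\cf(\lambda)$ and which therefore cannot be squeezed into any single $X_\beta$. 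Making this precise---verifying that within a level the accumulation relation is well-founded, so that every nonempty subset has a point isolated in it---is where the explicit combinatorial features of the construction, together with $\lambda^{\alephes}=\lambda$ and the regularity of $\cf(\lambda)$, must all be brought to bear. I would isolate this as a separate combinatorial lemma about the rank structure and derive clause~(3) from it, leaving clauses~(1), (2) and~(4) as comparatively direct verifications.
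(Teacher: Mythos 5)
Your proposal is not yet a proof; it is a statement of intent in which every decisive step is described only by the properties you want it to have. No underlying set, no neighbourhood filters, and no rank function are ever defined: the phrases ``the topology must supply, for each \dots\ $\w$-sequence \dots\ a limit point of higher rank,'' ``the construction is set up precisely so that each sequence appearing in a level is assigned, at a later stage, a limit point,'' and ``I would isolate this as a separate combinatorial lemma'' are exactly the places where the mathematics has to happen, and none of it does. In effect you have translated clauses (2)--(4) of the theorem into requirements on a stratification $\rho\colon S\to\cf(\lambda)$ (small sets have bounded rank, levels are scattered, crowdedness is witnessed cofinally), which is a reasonable reformulation, but the existence of a topology meeting all these requirements simultaneously \emph{is} the theorem; deferring it to an unstated lemma leaves the entire content unproven.

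There is also an internal inconsistency in the one mechanism you do commit to. You assert that ``a point is a limit point only of sets of cofinal rank.'' But clause (4) forces nontrivial convergent sequences: if $s_{n_k}\to p$ with the $s_{n_k}$ distinct, then $p$ is a limit point of the countable set $\{s_{n_k}: k\in\N\}$, which by your own observation ($\cf(\lambda)>\alephes$) has bounded rank. So in any space satisfying (4), the stated mechanism fails; what you actually need is the weaker claim you retreat to in your last sentences --- that within each level the accumulation relation is well-founded --- and that is precisely the part left undone. It is instructive to compare with how the paper resolves this tension: not by a single stratified construction, but by a decomposition $S=X\cup\lambda^*$ into two spaces with complementary defects. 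Here $X$ is a scattered, locally countable, sequentially compact Hausdorff space of size $\lambda$ with $\lambda$ isolated points (a Nyikos--Vaughan-type construction, where $\lambda^\alephes=\lambda$ is used to enumerate $[\lambda]^\alephes$ and handle each countable set in turn), and $\lambda^*$ is the tree of finite sequences of ordinals below $\lambda$, topologized so that it is dense in itself while every subset of size less than $\cf(\lambda)$ is closed and discrete. A maximal almost disjoint family $\{A_x : x\in I\}\sbst[\lambda^*]^\alephes$, indexed by the isolated points of $X$, glues the two: it destroys the isolated points of $X$ (giving (2)) and simultaneously provides convergent subsequences for infinite subsets of $\lambda^*$ (giving (4)), while small subsets of $S$ stay scattered because they are scattered inside $X$ and discrete inside $\lambda^*$. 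Your proposal contains no analogue of any of these three ingredients --- the scattered sequentially compact component, the crowded component with small sets discrete, or the gluing device --- and without something playing their role, the plan cannot be carried out as written.
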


The cardinals satisfying the assumption of the theorem are those
of the form $\lambda=\mu^\alephes$. Note that in this case,
$\cf(\lambda)>\alephes$. By the Hausdorff Formula, if
$\lambda^\alephes = \lambda$, then we have that
$$(\lambda^+)^\alephes=\max\{\lambda^+,\lambda^\alephes\} = \lambda^+,$$
so for each cardinal $\kappa$ we can take $\lambda=(\kappa^\alephes)^+$ (which is regular
and greater than $\kappa$)
to obtain a negative answer to Problem \ref{jvm} when stated for Hausdorff
spaces.

Of course, the original Problem \ref{jvm} is still interesting.
We hope that this paper will revive the interest in this problem.

\section{Proof of Theorem \ref{main}}

\begin{defn}
For a cardinal $\lambda$,
let $\lambda^*$ denote the collection of finite sequences of
elements of $\lambda$, and define a topology on $\lambda^*$
with subbase consisting of the \emph{$\epsilon$-neighborhoods}
$$[\eta]_\epsilon = \{\eta\}\cup\{\rho\in\lambda^* : \eta\subset\rho\mbox{ and }\rho(|\eta|)\ge\epsilon\},$$
of each $\eta\in\lambda^*$, where $\epsilon<\lambda$.
\end{defn}

\begin{lem}\label{lambda*}
\mbox{}
\be
\i $|\lambda^*|=\lambda$;
\i $\lambda^*$ is dense in itself;
\i For each $Y\sbst \lambda^*$ with $|Y|<\cf(\lambda)$:
\be
\i There exists $\epsilon<\lambda$ such that
the $\epsilon$-neighborhoods $[\eta]_\epsilon$, $\eta\in Y$, are pairwise disjoint.
\i $Y$ is closed and discrete.
\ee
\item Every two disjoint subsets of $\lambda^*$ of cardinality smaller than $\cf(\lambda)$
can be separated by disjoint open subsets of $\lambda^*$.
\ee
\end{lem}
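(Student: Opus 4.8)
The plan is to deduce this directly from part (3a), which is the crucial uniformity statement already in hand. Given disjoint $A,B\sbst\lambda^*$ with $|A|,|B|<\cf(\lambda)$, I would first pass to their union $Y=A\cup B$. Since $\cf(\lambda)$ is regular and infinite, the union of two sets each of cardinality below $\cf(\lambda)$ again has cardinality below $\cf(\lambda)$; hence $|Y|<\cf(\lambda)$ and part (3a) is applicable to $Y$ as a whole.

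By part (3a) there is a \emph{single} $\epsilon<\lambda$ for which the neighborhoods $[\eta]_\epsilon$, $\eta\in Y$, are pairwise disjoint. Using this one uniform $\epsilon$, I would set
$$U=\Union_{\eta\in A}[\eta]_\epsilon,\qquad V=\Union_{\eta\in B}[\eta]_\epsilon.$$
Both are open, being unions of subbasic open sets, and $A\sbst U$, $B\sbst V$ because $\eta\in[\eta]_\epsilon$ for every $\eta$. For disjointness, suppose $\rho\in U\cap V$; then $\rho\in[\eta]_\epsilon$ for some $\eta\in A$ and $\rho\in[\zeta]_\epsilon$ for some $\zeta\in B$. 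Since $A$ and $B$ are disjoint, $\eta\ne\zeta$, so $[\eta]_\epsilon\cap[\zeta]_\epsilon=\emptyset$ by (3a), a contradiction. Thus $U\cap V=\emptyset$, giving the required separation.

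The only genuine content is recognizing that (3a) furnishes one radius $\epsilon$ working simultaneously for \emph{all} of $Y$, rather than merely a separate radius for each pair $\{\eta,\zeta\}$; this global uniformity is exactly what keeps the two unions disjoint. The one point deserving a word is the cardinal arithmetic $|A\cup B|<\cf(\lambda)$, which rests on the regularity of $\cf(\lambda)$. Beyond that I expect no real obstacle, since the substantive work has already been carried out in establishing part (3a); part (4) is essentially a corollary obtained by splitting the single disjoint family indexed by $Y$ along the partition $Y=A\cup B$.
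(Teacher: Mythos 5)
Your derivation of item (4) is correct as far as it goes, and it is exactly the paper's intended route: the paper disposes of (4) with the one-line remark that it follows from (3a), applied (as you do) to the union $Y=A\cup B$, and your filled-in details---one uniform $\epsilon$, the unions $U=\Union_{\eta\in A}[\eta]_\epsilon$ and $V=\Union_{\eta\in B}[\eta]_\epsilon$, and disjointness because distinct $\eta\in A$, $\zeta\in B$ have disjoint $\epsilon$-neighborhoods---are accurate. The cardinality point $|A\cup B|<\cf(\lambda)$ is also fine, since the cofinality of an infinite cardinal is regular.

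The genuine gap is that the statement to be proved is the whole lemma, items (1) through (4), and you have treated its central claim (3a) as ``already in hand'' when it is in fact one of the assertions to be established---indeed the only one carrying real content; items (1), (2) and (3b) are not addressed at all. For (3a) the paper argues as follows: for distinct $\nu,\eta\in\lambda^*$ there is $\epsilon_{\nu,\eta}<\lambda$ with $[\nu]_{\epsilon_{\nu,\eta}}\cap[\eta]_{\epsilon_{\nu,\eta}}=\emptyset$ (take $\epsilon_{\nu,\eta}=0$ if $\nu$ and $\eta$ are $\sbst$-incomparable, and $\epsilon_{\nu,\eta}=\eta(|\nu|)+1$ if $\nu\subset\eta$); then, since $|Y|<\cf(\lambda)$, the supremum $\epsilon=\sup\{\epsilon_{\nu,\eta}:\nu,\eta\in Y\}$ is still below $\lambda$, and this single $\epsilon$ works for all of $Y$. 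This is precisely the ``global uniformity'' your proposal leans on, and it does not come for free: it is where the hypothesis $|Y|<\cf(\lambda)$ is actually used. Once (3a) is in place, (3b) follows by applying it to $Y$ (discreteness) and to $Y\cup\{y\}$ for any $y$ in the closure of $Y$ (closedness), (1) is immediate, and (2) holds because each $\eta$ is an accumulation point of $\{\eta\frown\epsilon:\epsilon<\lambda\}$. As it stands, your proposal is a correct deduction of (4) from (3a), but not a proof of the lemma.
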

\begin{proof}
(1) is immediate. To prove (2), note that each $\eta\in\lambda^*$ is the
accumulation point of $\{\eta\frown\epsilon : \epsilon<\lambda\}$.

(3a) We claim that for all
distinct $\nu,\eta\in\lambda^*$, there exists
$\epsilon_{\nu,\eta}<\lambda$ such that $[\eta]_{\epsilon_{\nu,\eta}}\cap [\nu]_{\epsilon_{\nu,\eta}}=\emptyset$.
Indeed, if $\nu$ and $\eta$ are incomparable with regards to $\sbst$, then we can take
$\epsilon_{\nu,\eta}=0$. Otherwise assume that $\nu\subset\eta$. Then we can take
$\epsilon_{\nu,\eta}=\eta(|\nu|)+1$.

Assume that $|Y|<\cf(\lambda)$.
Define $\epsilon = \sup\{\epsilon_{\nu,\eta} : \nu,\eta\in Y\}$. As $|Y|<\cf(\lambda)$,
$\epsilon<\lambda$ and is as required.

(3b) Assume that $|Y|<\cf(\lambda)$. By (3a), $Y$ is discrete.
To see that $Y$ is closed, assume that $y$ is in the closure of $Y$,
and apply (3a) to $Y\cup\{y\}$ to see that $y\in Y$.

(4) follow from (3a), either.
\end{proof}

The proof of the forthcoming Proposition \ref{mainlem} uses weak
bases. A \emph{weak base} of a topological space $X$ is a family of the form
$\Union_{x\in X}\cB(x)$ such that:
\be
\i For each $x\in X$:
\be
\i $x\in B$ for all $B\in \cB(x)$;
\i For all $B_1,B_2\in\cB(x)$, $B_1\cap B_2$ contains an element of $\cB(x)$;
\ee
\i For each $U\sbst X$, $U$ is open in $X$ if, and only if, for each $x\in U$, $U$ contains
an element of $\cB(x)$.
\ee
A space $X$ with a weak base $\Union_{x\in X}\cB(x)$ is \emph{weakly locally-countable/Haus\-dorff/regular} if, respectively:
\be
\i For each $x\in X$, there is a countable element in $\cB(x)$;
\i For all $x,y\in X$, there are disjoint $B_x\in\cB(x), B_y\in\cB(y)$;
\i $X$ is weakly Hausdorff and all elements of $\Union_{x\in X}\cB(x)$ are closed.
\ee

\begin{lem}[Nyikos-Vaughan \cite{NyikosVaughan92}]\label{alephesH}
For each weakly locally-countable space $X$:
\be
\i $X$ is locally countable; and
\i If $X$ is weakly regular, then it is Hausdorff.
\ee
\end{lem}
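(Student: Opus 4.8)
The plan is to handle the two parts separately, in both cases converting the (possibly non-open) weak-base members into genuine open sets by a \emph{saturation} (closing-off) construction of length $\w$; countability of the weak-base members is exactly what lets such a construction terminate in $\w$ stages.

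For (1), I would fix, for every $z\in X$, a countable $B_z\in\cB(z)$ (weak local-countability), and build an increasing sequence $U_0\sbst U_1\sbst\cdots$ by setting $U_0=B_x$ and $U_{n+1}=\Union_{z\in U_n}B_z$. Each $U_n$ is countable (a countable union of countable sets), so $U=\Union_n U_n$ is countable and contains $x$ (by property (1a)). The point is that $U$ is open: by the defining property of a weak base it suffices to check that every $z\in U$ has a member of $\cB(z)$ inside $U$, and indeed any such $z$ lies in some $U_n$, whence $B_z\sbst U_{n+1}\sbst U$. Thus $x$ has a countable open neighbourhood and $X$ is locally countable.

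For (2), assume $X$ is weakly regular, and fix distinct $x,y$. I would build \emph{two} disjoint open sets simultaneously by the same $\w$-length saturation, now keeping them apart using closedness. Using weak Hausdorffness together with (1b) and weak local-countability, start from disjoint \emph{countable closed} members $U_0\in\cB(x)$ and $V_0\in\cB(y)$, so that $\cl{U_0}\cap V_0=U_0\cap\cl{V_0}=\emptyset$. I would then grow $U_n$ and $V_n$ by dovetailing the openness obligations, maintaining the two invariants $\cl{U_n}\cap V_n=\emptyset$ and $U_n\cap\cl{V_n}=\emptyset$. When processing a point $z\in U_n$, the second invariant gives $z\nin\cl{V_n}$, so $X\sm\cl{V_n}$ is an open neighbourhood of $z$; intersecting a countable member of $\cB(z)$ with a member contained in $X\sm\cl{V_n}$ and applying (1b) yields a countable closed $B_z\in\cB(z)$ with $z\in B_z\sbst X\sm\cl{V_n}$, which I adjoin to $U_{n+1}$. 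Since $B_z$ is closed, $\cl{U_{n+1}}=\cl{U_n}\cup B_z$, and a short check shows both invariants persist; the $V$-side is symmetric. Countability of the $B_z$'s keeps every $U_n,V_n$ countable, so all openness obligations are discharged in $\w$ stages, making $U=\Union_n U_n$ and $V=\Union_n V_n$ open with $x\in U$, $y\in V$. Finally $U\cap V=\emptyset$, since a common point would lie in some $U_n\cap V_n\sbst U_n\cap\cl{V_n}=\emptyset$. Hence $X$ is Hausdorff.

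The saturation bookkeeping is routine; the main obstacle is maintaining global disjointness of the two open sets across the whole $\w$-length construction, given that the individual saturation steps only enlarge the sets. The resolution is to carry the stronger invariants $\cl{U_n}\cap V_n=\emptyset$ and $U_n\cap\cl{V_n}=\emptyset$ rather than mere disjointness: closedness of the weak-base members is what controls the closures under the union step, and weak Hausdorffness is what lets the induction start. Weak local-countability is indispensable, as it is what allows every openness requirement to be met in only $\w$ many stages.
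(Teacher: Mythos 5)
Your proposal is correct and takes essentially the same route as the paper: part (1) is exactly the paper's closing-off argument, and part (2) is the same $\w$-stage construction of two increasing, disjoint, countable, closed sets built from closed weak-base members, whose unions give the separating open sets. The differences are only bookkeeping: the paper seeds with the singletons $\{x\},\{y\}$ and pre-enumerates a countable open set containing both points (supplied by part (1)), so no dovetailing is needed, whereas you seed with disjoint closed weak-base members and dovetail the obligations; moreover, since your $U_n,V_n$ are finite unions of closed sets, your closure invariants collapse to the paper's simpler ``disjoint and closed.''
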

\begin{proof}
For the reader's convenience, we reproduce the proofs.

(1) Let $x\in X$. Set $U_0=\{x\}$ and inductively,
for each $y\in U_n$ choose a countable $D_y\in\cB(y)$,
and set $U_{n+1}=\Union_{y\in U_n}D_y$.
Then $\Union_nU_n$ is a countable open neighborhood of $x$.

(2) Let $x,y\in X$ be distinct. By (1), there is a countable open
neighborhood $U$ of $x,y$. Enumerate $U=\{x_n : n\in\N\}$.
Let $U_0=\{x\}, V_0=\{y\}$. By induction on $n$, assume
that $U_n,V_n$ are disjoint and closed. If $x_n\in U_n$,
pick $B\in\cB(x_n)$ such that $B\sbst U\sm V_n$ and let
$U_{n+1}=U_n\cup B, V_{n+1}=V_n$.
Otherwise, pick $B\in\cB(x_n)$ such that $B\sbst U\sm U_n$ and let
$U_{n+1}=U_n, V_{n+1}=V_n\cup B$.

The sets $U=\Union_n U_n, V=\Union_n V_n$ are disjoint open neighborhoods
of $x,y$, respectively.
\end{proof}

\begin{prop}\label{mainlem}
Assume that $\lambda^\alephes=\lambda$.
There exists a Hausdorff space $X$ such that:
\be
\i $|X|=\lambda$;
\i $X$ is locally countable;
\i $X$ is scattered;
\i There are $\lambda$ isolated points in $X$; and
\i $X$ is sequentially compact.
\ee
\end{prop}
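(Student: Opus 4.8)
The plan is to construct $X$ by a transfinite recursion of length $\omega_1$, beginning with $\lambda$ isolated points and, at each successive stage, adjoining one new limit point for each member of a carefully chosen almost disjoint family. Concretely, I would define an increasing chain $\langle X_\alpha : \alpha\le\omega_1\rangle$: let $X_0$ be a discrete space of size $\lambda$; at a successor stage fix a family $\mathcal{A}_\alpha$ of infinite, pairwise almost disjoint, \emph{closed discrete} subsets of $X_\alpha$ that is maximal among all such families, and put $X_{\alpha+1}=X_\alpha\cup\{p_C:C\in\mathcal{A}_\alpha\}$, giving each new point $p_C$ the weak-base neighborhoods $\{p_C\}\cup(C\sm F)$ for $F\in[C]^{<\alephes}$, while each isolated point $x$ keeps $\cB(x)=\{\{x\}\}$; at limit stages take unions, and set $X=X_{\omega_1}$. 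The weak base of $X$ is the union of all these $\cB(\cdot)$. Since each $X_\alpha$ has size at most $\lambda$ and $\mathcal{A}_\alpha\sbst[X_\alpha]^{\alephes}$ has at most $\lambda^{\alephes}=\lambda$ members, an induction keeps $|X_\alpha|\le\lambda$ at every stage, so $|X|\le\lambda\cdot\omega_1=\lambda$ (using $\omega_1\le\lambda$, which holds because $\cf(\lambda)>\alephes$). This cardinal bookkeeping is the one place where the hypothesis $\lambda^{\alephes}=\lambda$ is essential.

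The auxiliary properties should then fall out routinely. Every basic neighborhood is countable, so $X$ is weakly locally-countable and hence locally countable by Lemma~\ref{alephesH}(1); and one checks that $X$ is weakly Hausdorff with each basic neighborhood closed, i.e.\ weakly regular, so Hausdorffness follows from Lemma~\ref{alephesH}(2) (the separation computation is the content of the next paragraph). The isolated points are exactly the points of $X_0$, of which there are $\lambda$, since no later step turns a singleton open set into a non-open one and each $p_C$ is a genuine limit of $C$. Scatteredness comes from a minimal-stage argument: given nonempty $A\sbst X$, take the least $\beta$ with $A\cap X_\beta\neq\emptyset$; this $\beta$ is $0$ or a successor, and any witnessing point $a\in A\cap X_\beta$ has its entire defining neighborhood contained in the strictly earlier stages that $A$ misses, so $a$ is isolated in $A$.

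Sequential compactness is the payoff and is where the whole design is aimed. Given a sequence in $X$, its countable range lies in some $X_\alpha$ with $\alpha<\omega_1$, because $\cf(\omega_1)=\omega_1>\alephes$. If the sequence already has a convergent subsequence in $X_\alpha$ we are done; otherwise, after discarding repetitions we may assume it is injective, and then (using that $X_\alpha$ is first countable, each $p_C$ having the countable decreasing neighborhood base indexed by $[C]^{<\alephes}$) the range has no accumulation point in $X_\alpha$, i.e.\ it is infinite closed discrete. By the maximality of $\mathcal{A}_\alpha$ it must meet some $C\in\mathcal{A}_\alpha$ in an infinite set, and that intersection, enumerated, is a subsequence converging to the point $p_C$ adjoined at stage $\alpha+1$. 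Convergence to $p_C$ is never destroyed at later stages because the neighborhood filter of $p_C$ is fixed once and for all.

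The main obstacle is keeping the space Hausdorff while still catching \emph{every} sequence, and these two demands pull against each other. To catch all infinite closed discrete sets one is tempted to adjoin limits along a family that is cofinal downward in $([X_\alpha]^{\alephes},\sbst)$, but such a family can never be almost disjoint, and without almost-disjointness two limit points $p_C,p_D$ with $|C\cap D|=\alephes$ would both be limits of the sequence $C\cap D$, violating Hausdorffness. The resolution is twofold: first, I only need infinite \emph{intersection}, not containment, to produce a convergent subsequence, so maximal almost disjointness (which guarantees exactly such infinite intersections) is enough for sequential compactness; second, insisting that every member of every $\mathcal{A}_\alpha$ be closed discrete forces $|C\cap D|<\alephes$ whenever $p_C$ is introduced before $p_D$ (since the later, closed discrete $D$ cannot accumulate at the earlier limit $p_C$), which together with almost-disjointness at a common stage yields disjoint basic neighborhoods for any two distinct limit points. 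Verifying this separation clause, and the attendant claim that each basic neighborhood is closed, is the delicate technical step on which the whole argument turns.
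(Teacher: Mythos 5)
Your construction is genuinely different from the paper's (the paper runs a recursion of length $\lambda$ over an enumeration of $[\lambda]^\alephes$, adding one point per countable set; you iterate maximal almost disjoint families of closed discrete sets $\omega_1$ times), and most of it is sound: the cardinal bookkeeping, the Hausdorffness analysis (almost disjointness at a common stage, closed-discreteness forcing finite intersection across stages, weak-base elements closed, then Lemma~\ref{alephesH}), and the isolated-points count all go through. But the crux of your sequential compactness argument rests on a false claim: $X_\alpha$ is \emph{not} first countable once $\alpha\ge 2$, because the sets $\{p_C\}\cup(C\sm F)$ are only weak-base elements, not neighborhoods of $p_C$. Indeed, for $\alpha\ge 1$ the members $C\in\mathcal{A}_\alpha$ consist of earlier limit points $p_D$, and any open $U$ with $p_C\in U\sbst\{p_C\}\cup C$ would have to contain some set $D\sm F'$ for some $p_D\in U\cap C$, forcing $D\sbst^* C$; this is impossible since $C$ closed discrete and $D\to p_D$ force $C\cap D$ to be finite. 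Consequently the inference ``no convergent subsequence $\Rightarrow$ no accumulation point $\Rightarrow$ closed discrete'' is unjustified, and in fact its first-countability-style strengthening is false in your own space: if $C=\{p_{D_n}:n\in\N\}\in\mathcal{A}_1$ (modulo finite sets, every member of $\mathcal{A}_1$ has this form) and $A\sbst X_0$ picks infinitely many points from each $D_n$, then $p_C$ is an accumulation point of the countable set $A$, yet no subsequence of $A$ converges to $p_C$, since any injective sequence in $X_0$ meets some $D\in\mathcal{A}_0$ infinitely and hence can only converge to a stage-one point. So your space has Arens-space behavior, and the dichotomy you need (accumulation point $\Rightarrow$ \emph{some} convergent subsequence) requires a real argument.

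The gap is repairable in two ways. (i) Quote the right fact, as the paper itself does: a space with countable weak bases is sequential (Arhangel'skii); in a sequential Hausdorff space, a countable set $A$ that is not closed discrete yields an injective convergent sequence (if $A$ is not closed, take a sequence from $A$ converging outside $A$, necessarily of infinite range; if $A$ is closed but not discrete with accumulation point $a\in A$, apply the same to $A\sm\{a\}$, which is not closed). (ii) Better, avoid the dichotomy by a minimal-stage argument: given countable infinite $A$, let $\beta$ be least with $A\cap X_\beta$ infinite; then $A\cap X_\beta$ contains an infinite set closed discrete \emph{in $X_\beta$} --- trivially if $\beta=0$; if $\beta=\gamma+1$, all but finitely many of its points are stage-$\beta$ points, which form a closed discrete set because $X_\gamma\cup\{p\}$ is open for each such $p$; if $\beta$ is a limit, each $x\in X_\beta$ lies in some open $X_{\beta'}$, $\beta'<\beta$, which meets $A$ finitely. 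Then maximality of $\mathcal{A}_\beta$ hands you a subsequence converging to some $p_C$, and this persists in $X$ because each $X_\alpha$ is open in $X$ and carries the subspace topology (the correct form of your ``neighborhood filter is fixed'' remark). The same conflation of weak-base elements with neighborhoods appears, harmlessly, in your scatteredness paragraph: the isolating open set for $a\in A\cap X_{\gamma+1}$ should be $X_\gamma\cup\{a\}$, which is open, rather than the ``defining neighborhood'' itself. With these repairs your route does prove the proposition; the price, compared with the paper's one-set-per-stage enumeration (where the minimal-cluster-point argument is immediate), is exactly this extra care at the sequential-compactness step.
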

\begin{proof}
This is proved as in Nyikos and Vaughan's ``Construction of $X_u$'' \cite[Page 313]{NyikosVaughan92}.
In our case, we should replace $\mathfrak{c}$ with $\lambda$ (and use the premise $\lambda^\alephes=\lambda$),
need not worry about the ultrafilter $u$, and verify that there are $\lambda$ isolated points in $X$.
We give a full direct proof.

The topology is constructed on the space $X=\lambda$.

Enumerate $[\lambda]^\alephes=\{D_\alpha : \omega\le\alpha<\lambda\}$,
such that $D_\alpha\sbst\alpha$ for each $\alpha$.
We construct, inductively on $\alpha$ with $\omega\le\alpha\le\lambda$,
topologies $\tau_\alpha$ on $\alpha$,
starting with $\alpha=\omega$ and taking the discrete topology $\tau_\omega$ on $\omega$.
(For each $n$, we set $\cB(n)=\{n\}$.)

\paragraph{When $\alpha$ is a limit ordinal:}
Let $\tau_\alpha$ be the topology having $\Union_{\beta<\alpha}\tau_\beta$
as a base.

\paragraph{When $\alpha=\beta+1$ is a successor ordinal:}
If $D_\beta$ has a cluster point in $(\beta,\tau_\beta)$, set
$\cB(\beta)=\{\{\beta\}\}$, and let $\tau_\alpha$ be the topology
having $\tau_\beta\cup\{\{\beta\}\}$ as a base.

If $D_\beta$ has no cluster point in $(\beta,\tau_\beta)$, then it
is closed and discrete in $(\beta,\tau_\beta)$. Let $\cB(\beta)$
be the family of all sets $C\cup\{\beta\}$ such that $C$ is a
cofinite subset of $D_\beta$. Let $\tau_\alpha$ be the topology on
$\alpha$ with weak base given by the families $\cB(\gamma)$,
$\gamma<\alpha$. In other words, $\tau_\alpha$ is the topology having
as a base the sets $U\in\tau_\beta$ as well as the sets $U$ such
that: $\beta\in U$, $U\cap\beta\in\tau_\beta$, and $U$ contains
some member of $\cB(\beta)$.

This completes the inductive construction. We take $X=\lambda$, with the topology $\tau_\lambda$.

For each $\alpha$, consider the space $(\alpha,\tau_\alpha)$ with
the weak base given by the families $\cB(\beta)$, $\beta<\alpha$,
defined in the construction. By definition, it is weakly locally
countable. We prove that $(\alpha,\tau_\alpha)$ is
weakly regular.

The construction is such that open sets remain open in all extensions,
and no new relative open sets are added to the already constructed spaces.
Thus, sets which are compact at some stage remain compact.
At each step, the added weak base elements are either singletons or convergent
sequences, and consequently are compact. It thus remains to prove that
the spaces are Hausdorff, and this is done by induction.
(By Lemma \ref{alephesH}, for spaces with a weak base consisting of countable compact sets,
being Hausdorff is equivalent to being weakly regular.)

\paragraph{$\alpha=\omega$:} this is evident.

\paragraph{$\alpha$ is a limit ordinal:}
If $\beta<\gamma<\alpha$ are distinct, by the induction hypothesis
they are separated by disjoint open sets from $\tau_{\gamma+1}$.

\paragraph{$\alpha=\beta+1$ is a successor ordinal:}
If $D_\beta$ has a cluster point in $(\beta,\tau_\beta)$, then
$\cB(\beta)=\{\{\beta\}\}$. For each $\gamma<\beta$, the open sets
$\beta$ and $\{\beta\}$ are disjoint and contain the points
$\gamma$ and $\beta$, respectively. By the induction hypothesis,
any two points in $\beta$ are separated by disjoint open sets.
Thus, $(\alpha,\tau_\alpha)$ is Hausdorff.

In the remaining case, where $D_\beta$ is closed discrete in
$(\beta,\tau_\beta)$, we prove that $(\alpha,\tau_\alpha)$ is
weakly regular. Let $\gamma\in \beta$. Then
$D_\beta\cup\{\gamma\}$ is discrete in $(\beta,\tau_\beta)$. Take
$U\in\tau_\beta$ such that $\gamma\in U$ and $U\cap
(D_\beta\sm\{\gamma\})=\emptyset$. Let $V\in\cB(\gamma)$ be a
subset of $U$. Then $V$ is disjoint from
$(D_\beta\sm\{\gamma\})\cup\{\beta\}\in\cB(\beta)$. Thus,
$(\beta,\tau_\beta)$ is weakly Hausdorff.

It remains to show that the elements of the weak base of
$(\alpha,\tau_\alpha)$ are closed. Let $B\in\cB(\beta)$.
$B\sm\{\beta\}$ is a cofinite subset of $D_\beta$, and thus
$\alpha\sm B =
\beta\sm(B\sm\{\beta\})\in\tau_\beta\sbst\tau_\alpha$. For
$\gamma<\beta$, let $B\in\cB(\gamma)$. As $B$ is compact and
$D_\beta$ is closed and discrete in $(\beta,\tau_\beta)$, $B\cap
D_\beta$ is finite. By the induction hypothesis, $B$ is closed in
$(\beta,\tau_\beta)$, and therefore $\beta\sm B\in\tau_\beta$, and
contains a cofinite subset of $D_\beta$. Thus, $\alpha\sm B =
(\beta\sm B)\cup\{\beta\}\in\tau_\alpha$.

This completes the proof that $X$ is Hausdorff.

Each nonempty subset $A$ of $X$ has an isolated point:
Take $\alpha=\min A$. Then $\alpha\in\alpha+1\in\tau_{\alpha+1}\sbst\tau_\lambda$,
and $\alpha+1$ is disjoint from $A\sm\{\alpha\}$.

$X$ has $\lambda$ many isolated points:
Let $\beta<\lambda$ be such that $D_\beta=\omega$.
For each $\alpha>\beta$, $\omega\cup\{\alpha\}$ is a countable
subset of $\lambda$ which is not a subset of $\beta$. Thus,
these sets are considered in (successor) stages later than $\beta+1$, where they
already have a cluster point. Consequently, in each of these $\lambda$ many
stages, a new isolated point is added.

$X$ is countably compact: Let $A$ be a countable subset of $\lambda$.
Let $\beta$ be such that $D_\beta=A$. Then either $A$ has
a cluster point in $(\beta,\tau_\beta)$ and thus in $X$ (since $\beta$ is open in $X$),
or else $A$ converges to $\beta$ (in $(\beta+1,\tau_{\beta+1})$
and therefore in $X$).

$X$ is sequentially compact:
This can be proved by quoting basic facts. Indeed,
$X$ is countably compact and has a countable weak base at each point.
Thus, $X$ is sequential, and therefore sequentially compact.

Alternatively, we can argue directly.
Let $A$ be a countable subset of $\lambda$.
Let $\beta$ be the minimal cluster point of $A$.
As $\beta$ is not isolated in $X$, $D_\beta$ converges to $\beta$.
In $(\tau_\beta,\beta)$, $A\cap\beta$ is closed, and therefore $U=\beta\sm A$ is open
in $X$. If $D_\beta\sm U$ is finite, then $U\cup\{\beta\}$ is an open neighborhood of
$\beta$, and $(U\cup\{\beta\})\cap A\sbst\{\beta\}$, in contradiction to $\beta$ being
a cluster point of $A$.
Thus, $D_\beta\sm U=D_\beta\cap A$ is infinite, and is therefore a subsequence of $A$
converging to $\beta$.
\end{proof}

\begin{proof}[Proof of Theorem \ref{main}]
Assume that $\lambda^\alephes=\lambda$, and let $X$ be as in Lemma \ref{mainlem}.
We may assume that $X\cap \lambda^*=\emptyset$.
We will define the required topology on $S=X\cup \lambda^*$.

Let $I$ be the set of isolated points of $X$, and let
$\{A_x : x \in I\}$ be maximal almost disjoint in $[\lambda^*]^\alephes$.
The open
sets in the topology of $S$ are those of the form
$$U\cup V,$$
where $U$ is open in $X$, $V$ is open in $\lambda^*$, and
for each $x\in I\cap U$, $A_x \sm V$ is finite.
(Note that for $U=\emptyset$, we get that each open
set in $\lambda^*$ is also open in $S$.)

Clearly, $|S|=\lambda$.

$S$ is Hausdorff: Consider any distinct $x_0,x_1\in X$ and
any distinct $y_0,y_1\in \lambda^*$. We will find disjoint open
sets $G_0,G_1$ such that $x_i,y_i\in G_i$, $i=0,1$.
$X$ is Hausdorff and locally countable.
Take disjoint countable open subsets $U_0,U_1$ of $X$
containing $x_0,x_1$, respectively.
Enumerate $I\cap (U_0\cup U_1)=\{x_n : n\in\N\}$.
For each $n$, define $\tilde A_{x_n} = A_{x_n}\sm\Union_{k<n} A_{x_k}$.
Note that $\tilde A_{x_n}$ is a cofinite subset of $A_{x_n}$.
By Lemma \ref{lambda*}, there are disjoint open sets $V_0,V_1$ in $\lambda^*$
such that
$$(\{y_i\}\cup\Union_{x\in I\cap U_i}\tilde A_x)\sm\{y_{1-i}\}\sbst V_i,$$
$i=0,1$.
The sets $G_i=U_i\cup V_i$, $i=0,1$, are as required.

$S$ is dense in itself: $\lambda^*$ is dense in itself, hence
(by the definition of open sets in $S$) each
$y\in \lambda^*$ is an accumulation point in $S$.
Let $x\in X$. If $x$ is an accumulation point in $X$,
then $x$ is an accumulation point in $S$.
Otherwise, $x$ is in the closure of $A_x$:
Indeed, the sets $\{x\}\cup V$ where $V$ is open
in $\lambda^*$ and $A_x\sm V$ is finite form a neighborhood
base at $x$, and therefore the elements of $A_x$ converge to $x$.

Each $Z\sbst S$ with $|Z|<\cf(\lambda)$ is scattered:

Case 1: $Z\sbst X$. As $X$ is scattered, we can find
$x\in Z$ isolated in $X$. Then $\{x\}\cup \lambda^*$ is a neighborhood
of $x$ in $S$ and is disjoint from $Z\sm\{x\}$.

Case 2: $Z\cap \lambda^*\neq\emptyset$.
$|Z\cap \lambda^*|<\cf(\lambda)\le\lambda=|I|$.
Thus, $Z\cap \lambda^*$ is discrete in $\lambda^*$.
As each open set in $\lambda^*$ is also open in $S$,
we are done.

$S$ is sequentially compact:
Assume that $Z\in [S]^\alephes$.
If $Z\cap X$ is infinite, then there are
$x\in X$ and a subsequence of $Z\cap X$ converging
to $x$. It is clear that the same subsequence will also
converge to $x$ in the topology of $S$.
And if not, then $Z\cap \lambda^*$ is infinite.
As $\{A_x : x \in I\}$ is maximal almost disjoint in $[\lambda^*]^\alephes$,
there is $x\in I$ such that $A_x\cap Z\cap \lambda^*$ is infinite.
Being a subsequence of $A_x$, it converges to $x$.

This completes the proof of Theorem \ref{main}.
\end{proof}

\subsection*{Acknowledgments}
We thank Istv\'an Juh\'asz for his substantial simplification
of our proof, and Peter Nyikos and Lyubomyr Zdomskyy for their useful suggestions.

Our research is partially supported by: The United States-Israel Binational Science Foundation grant number 2002323
(first author); and Koshland Center for Basic research (second author).

This is Publication 921 on Shelah's list.

\end{document}